\documentclass{amsart}

\usepackage{amsmath,amssymb,amsfonts}
\newtheorem{theorem}{Theorem}[section]
\newtheorem{lemma}[theorem]{Lemma}

\theoremstyle{definition}
\newtheorem{definition}[theorem]{Definition}

\newtheorem{coro}[theorem]{Corollary}
\newtheorem{conj}[theorem]{Conjecture}

\theoremstyle{remark}

\numberwithin{equation}{section}



\def \x{\mathbf{x}}
\def \v{\mathbf{v}}
\def \u{\mathbf{u}}
\def \i{\mathbf{i}}
\def \A{\mathcal{A}}

\def \la{\lambda}
\def \I{\mathcal{I}}

\def \S{\mbox{Spec}}

\def\Z{\mathbb{Z}}

\begin{document}
\title[Cyclic index of generalized power hypergraphs]{The cyclic index of adjacency tensor of generalized power hypergraphs }

\author[Y.-Z. Fan]{Yi-Zheng Fan}
\address{School of Mathematical Sciences, Anhui University, Hefei 230601, P. R. China}
\email{fanyz@ahu.edu.cn}
\thanks{The first author is the corresponding author, and was supported by National Natural Science Foundation of China \#11871073.
}

\author[M. Li]{Min Li}
\address{School of Mathematical Sciences, Anhui University, Hefei 230601, P. R. China}
\email{1736808193@qq.com}

\subjclass[2000]{Primary 15A18, 05C65; Secondary 13P15, 05C15}




\begin{abstract}
Let $G$ be a $t$-uniform hypergraph, and let $c(G)$ denote the cyclic index of the adjacency tensor of $G$.
Let $m,s,t$ be positive integers such that $t \ge 2$, $s \ge 2$ and $m=st$.
The generalized power $G^{m,s}$ of $G$ is obtained from $G$ by blowing up each vertex into an $s$-set and preserving the adjacency relation.
It was conjectured that $c(G^{m,s})=s \cdot c(G)$.
In this paper we show that the conjecture is false by giving a counterexample, and give some sufficient conditions for the conjecture holding.
Finally we give an equivalent characterization of the equality in the conjecture by using a matrix equation over $\Z_m$.

%
\end{abstract}

\subjclass[2000]{Primary 15A18, 05C65; Secondary 13P15, 05C15}

\keywords{Generalized power hypergraph, adjacency tensor, spectral symmetry, cyclic index}

\maketitle

\section{Introduction}
A hypergraph $G=(V(G), E(G))$ consists of a set of vertices, say $V(G)=\{v_{1},v_{2},\cdots v_{n}\}$,
   and a set of edges, say $E(G)=\{e_{1}, e_{2}, \cdots e_{k}\}$, where $e_{j}\subseteq V(G)$ for $j \in [k]:=\{1,2,\ldots,k\}$.
   If $|e_{j}|=m$ for each $j \in [k]$, then $G$ is called an $m$-uniform hypergraph.
A {\it walk} $W$ in $G$ is a sequence of alternating vertices and edges: $v_{i_0},e_{i_1},v_{i_1},e_{i_2},\ldots,e_{i_l},v_{i_l}$,
    where $\{v_{i_j},v_{i_{j+1}}\}\subseteq e_{i_{j+1}}$ for $j=0,1,\ldots,l-1$.
The hypergraph $G$  is {\it connected} if every two vertices of $G$ are connected by a walk.
The {\it adjacency tensor} $\A(G)$ of the hypergraph $G$ is defined as $\mathcal{A}(G)=(a_{i_{1}i_{2}\ldots i_{k}})$ \cite{cooper}, an $m$-th order $n$-dimensional tensor, where
\[a_{i_{1}i_{2}\ldots i_{m}}=\left\{
 \begin{array}{ll}
\frac{1}{(m-1)!}, &  \mbox{if~} \{v_{i_{1}},v_{i_{2}},\ldots,v_{i_{m}}\} \in E(G);\\
  0, & \mbox{otherwise}.
  \end{array}\right.
\]

In general, A \emph{tensor} (also called \emph{hypermatrix}) $\A=(a_{i_{1} i_2 \ldots i_{m}})$ of order $m$ and dimension $n$ over a field $\mathbb{F}$ refers to a
 multiarray of entries $a_{i_{1}i_2\ldots i_{m}}\in \mathbb{F}$ for all $i_{j}\in [n]$ and $j\in [m]$,
  which can be viewed to be the coordinates of the classical tensor (as a multilinear function) under an orthonormal basis.
If $m=2$, then $\A$ is a square matrix of dimension $n$.

In 2005, independently, Lim \cite{Lim} and Qi \cite{Qi} introduced eigenvalues for tensors $\A$.
Denote by $\rho(\A)$ the spectral radius of $\A$, and by $\S(\A)$ the spectrum of $\A$.
If $\A$ is further nonnegative, then by Perron-Frobenius theorem of nonnegative tensors,
   $\rho(\A)$ is an eigenvalue of $\A$.
Moreover, if $\A$ is weakly irreducible and has $k$ eigenvalues of $\A$ with modulus $\rho(\A)$, then those $k$ eigenvalues are equally distributed on the spectral circle.
As for nonnegative matrices, the number $k$ is called the cyclic index of $\A$ \cite{CPZ2}.
The cyclic index reflects the spectral symmetry of nonnegative weakly irreducible tensors, which was generalized and investigated in the paper \cite{FHBZL}.

\begin{definition}[\cite{FHBZL}]\label{ell-sym}
Let $\A$ be an $m$-th order $n$-dimensional tensor, and let $\ell$ be a positive integer.
The tensor $\A$ is called {\it spectral $\ell$-symmetric} if
\begin{equation}\label{sym-For}\S(\A)=e^{\i \frac{2\pi}{\ell}}\S(\A).\end{equation}

The maximum number $\ell$ such that (\ref{sym-For}) holds is called the {\it cyclic index} of $\A$ and denoted by $c(\A)$, and $\A$ is called
{\it spectral $c(\A)$-cyclic}.
\end{definition}

When we say a hypergraph is {\it spectral $\ell$-symmetric} or {\it spectral $\ell$-cyclic},
this is always referring to its adjacency tensor.
In particular, for a uniform hypergraph $G$, denote $c(G):=c(\A(G))$, called the cyclic index of $G$.

For a general tensor $\A$, if it is spectral $\ell$-symmetric, then $\ell | c(\A)$ by \cite[Lemma 2.7]{FHBZL}.
It was also proved that if a connected $m$-uniform hypergraph is spectral $\ell$-symmetric, then $\ell |m$,
and hence $c(G) |m$; see \cite[Lemma 3.2, Corollary 4.3]{FHBZL}, \cite[Lemma 2.5]{FBH} or \cite[Theorem 2.15]{YY2}.
In the paper \cite{FHBZL} the authors use the construction of generalized power hypergraphs to show that
for every positive integer $m$ and any positive integer $\ell$ such that $\ell | m$, there always exists an $m$-uniform hypergraph $G$ such that $G$ is spectral $\ell$-symmetric.
They posed the following conjecture.

\begin{conj}[\cite{FHBZL}]\label{conj} let $G$ be a $t$-uniform hypergraph, and let $G^{m,s}$ be the generalized power of $G$, where $m=st$. Then
\begin{equation}\label{conjFm} c(G^{m,s})=s \cdot c(G). \end{equation}
\end{conj}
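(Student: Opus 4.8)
The plan is to convert spectral symmetry into a congruence system attached to the edge--vertex incidence matrix of $G$. The first step is to recall (from \cite{FHBZL}; see also \cite{FBH,YY2}) the combinatorial description of the cyclic index of a connected $t$-uniform hypergraph $H$ with incidence matrix $B\in\{0,1\}^{E(H)\times V(H)}$: for a divisor $\ell\mid t$, $\A(H)$ is spectral $\ell$-symmetric if and only if there is a map $\phi\colon V(H)\to\Z_t$ with $\sum_{v\in e}\phi(v)\equiv t/\ell\pmod t$ for every edge $e$, i.e.\ iff the system $B\phi\equiv(t/\ell)\mathbf{1}$ is solvable over $\Z_t$. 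The ``if'' direction is elementary --- substituting $x=D^{-1}y$ in the eigenequation $\A(H)x^{t-1}=\lambda x^{[t-1]}$, where $D$ is the diagonal matrix with entries $e^{\i 2\pi\phi(v)/t}$, replaces $\lambda$ by $e^{\i 2\pi/\ell}\lambda$, so that $\S(\A(H))=e^{\i 2\pi/\ell}\S(\A(H))$ --- and the converse is the substantive input I would borrow from those references. Since the residues $c$ for which $B\phi\equiv c\mathbf{1}\pmod t$ is solvable form a subgroup of $\Z_t$, generated by some $d(H)\mid t$, the characterization reads $c(H)=t/d(H)$.

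Next I would use the one structural feature of the generalized power. In $G^{m,s}$ the $s$ clones of a vertex $v_i$ of $G$ always occur together in any edge meeting them, so for any labelling $\phi\colon V(G^{m,s})\to\Z_m$ the sum of $\phi$ along an edge of $G^{m,s}$ depends only on the block-sums $\Phi_i:=\sum_{j}\phi(v_i^j)$, and conversely (as $s\ge1$) every $\Phi\colon V(G)\to\Z_m$ arises this way. Hence $G^{m,s}$ is spectral $L$-symmetric for $L\mid m=st$ exactly when $B\Phi\equiv(m/L)\mathbf{1}\pmod m$ is solvable --- \emph{with the same matrix} $B$ --- so $c(G^{m,s})=m/d'$, where $d'\mid m$ generates the subgroup of residues $c$ for which $B\Phi\equiv c\mathbf{1}\pmod m$ is solvable. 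Reducing such a congruence modulo $t$ gives $d(G)\mid d'$, whence $c(G^{m,s})=m/d'$ always divides $m/d(G)=s\,c(G)$; this is the one half of (\ref{conjFm}) that holds unconditionally. So the conjecture is equivalent to $d'=d(G)$, that is, to the solvability of the single matrix equation $B\Phi\equiv (t/c(G))\,\mathbf{1}\pmod m$ over $\Z_m$ --- the promised characterization of the equality.

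The question has thus collapsed to a lifting problem: a solution of $B\Phi\equiv(t/c(G))\mathbf{1}$ exists modulo $t$ by definition of $c(G)$, and one asks whether it survives modulo $m=st$. This is governed by how the all-ones vector $\mathbf{1}_{E(G)}$ sits in $\mathrm{coker}(B)$ and by the $p$-primary torsion of $\mathrm{coker}(B)$. Two remarks cut it down: since $B\mathbf{1}_{V(G)}=t\,\mathbf{1}_{E(G)}$, the image of $\mathbf{1}_{E(G)}$ in $\mathrm{coker}(B)$ has order dividing $t$, so no prime $p\nmid t$ can obstruct the lift; and at a prime $p\mid t$ the obstruction is exactly a cyclic summand $\Z/p^{e}$ of $\mathrm{coker}(B)$ with $e>v_p(t)$ that $\mathbf{1}_{E(G)}$ reaches into. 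This yields clean sufficient conditions, e.g.\ $\mathbf{1}_{E(G)}\in\mathrm{Im}_{\Z}(B)$ (forcing $c(G)=t$ and the conjecture for every $s$), or the invariant factors of $B$ being squarefree. I expect this last point to be the real obstacle --- and the place where the conjecture fails rather than being provable: for $t=2$ the incidence matrix of a connected graph has cokernel torsion only $\Z/2$, so the lift always goes through and (\ref{conjFm}) holds, but for $t\ge3$ one can arrange $\mathrm{coker}(B)$ to carry $\Z/p^{2}$ torsion reached by $\mathbf{1}_{E(G)}$, and pairing such a $G$ with an $s$ divisible by $p$ then forces $c(G^{m,s})<s\,c(G)$. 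The realistic outcome of this plan is therefore a counterexample, together with the sharp solvability criterion and the sufficient conditions above.
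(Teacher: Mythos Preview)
Your framework coincides with the paper's: the reduction to the solvability of $B_G\Phi\equiv c\,\mathbf{1}$ over $\Z_m$ via block-sums, the unconditional divisibility $c(G^{m,s})\mid s\cdot c(G)$, and the characterization ``equality holds iff $B_G\Phi\equiv (t/c(G))\mathbf{1}$ is solvable over $\Z_m$'' are exactly Lemmas~3.1--3.3, Corollary~3.4, and the final theorem of the paper. Your conclusion that the conjecture fails is also the paper's conclusion.

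Where you diverge is in \emph{how} you produce the counterexample and the sufficient conditions. The paper proceeds concretely: it takes Nikiforov's $4k$-uniform odd-colorable hypergraph (so $c(G)=2$), shows directly that for even $s$ the system $B_G\phi\equiv 2k\pmod{4ks}$ forces a parity contradiction, and hence $c(G^{m,s})=s\ne 2s$. Its sufficient conditions are the elementary coprimality ones $(s,c(G))=1$ or $(s,t)=1$. You instead recast the lifting problem through the $p$-primary structure of $\mathrm{coker}(B_G)$ and the image of $\mathbf{1}_{E(G)}$ there, which is more conceptual and yields different sufficient conditions (e.g.\ squarefree invariant factors, or $\mathbf{1}_{E(G)}\in\mathrm{Im}_{\Z}(B_G)$) and the pleasant observation that the $t=2$ case always holds. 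Both routes are sound; the paper's buys an explicit, verifiable example with minimal machinery, while yours explains \emph{why} obstructions arise and classifies them, at the cost of leaving the actual counterexample to be exhibited. To complete your write-up you would still need to produce a specific $G$ with the requisite $p^2$-torsion in $\mathrm{coker}(B_G)$ reached by $\mathbf{1}_{E(G)}$; the Nikiforov hypergraph (at $p=2$) is one such.
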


The generalized power of a hypergraph is defined as follows.

\begin{definition}[\cite{KLQY16}] \label{powerhyper}
Let $G=(V,E)$ be a $t$-uniform hypergraph. For any integers $m,s$ such that $m > t$ and $1 \le s \le \frac{m}{t}$, the generalized power of $G$, denoted by $G^{m,s}$, is defined as the $m$-uniform hypergraph with the vertex set $(\cup_{v \in V} \v) \cup (\cup_{e \in E}\mathbf{e})$, and the edge set
$\{\u_1 \cup \cdots \cup \u_t \cup \mathbf{e}: e=\{u_1,\ldots,u_t\} \in E(G)\}$,
where $\v$ denotes an $s$-set corresponding to $v$ and $\mathbf{e}$ denotes an $(m-ts)$-set corresponding to $e$, and all those sets are pairwise disjoint.
\end{definition}

In this paper, we only consider the power hypergraphs $G^{m,s}$ with $m=st$, i.e.
  $G^{m,s}$ is obtained from $G$ by blowing up each vertex into an $s$-set and preserving the adjacency relation.
The generalized power hypergraphs include some special cases, such as the powers of simple graphs introduced by Hu, Qi and Shao \cite{HQS13},
the generalized powers of simple graphs introduced by Khan and Fan \cite{KF15}.
Peng \cite{Peng} introduced $s$-paths and $s$-cycles with uniformity $m$ on discussing the Ramsey number,
which are exactly the generalized pows of paths and cycles (as simple graphs) respectively if $1 \le s \le \frac{m}{2}$.
The spectral results on generalized power hypergraphs can be found in \cite{HQS13,ZSWB14,KF15,YQS16,KFT16,KLQY16}.

%

For the conjecture \ref{conj}, it was shown that it is true if $c(G)=1$ \cite{FHBZL}.
In this paper we show that the conjecture is false by giving a counterexample, and give some sufficient conditions for the conjecture holding.
We finally give an equivalent characterization of Eq. (\ref{conjFm}) by using a matrix equation over $\Z_m$.



\section{Preliminaries}
\subsection{Notions}
Let $\A$ be a real tensor of order $m$ and dimension $n$.
The tensor $\A$ is called \textit{symmetric} if its entries are invariant under any permutation of their indices.
So, the adjacency tensor of a uniform hypergraph is symmetric.

 Given a vector $x\in \mathbb{C}^{n}$, $\A x^{m} \in \mathbb{C}$ and $\A x^{m-1} \in \mathbb{C}^n$, which are defined as follows:
   \[
   \begin{split} \A x^{m}&=\sum_{i_1,i_{2},\ldots,i_{m}\in [n]}a_{i_1i_{2}\ldots i_{m}}x_{i_1}x_{i_{2}}\cdots x_{i_m},\\
   (\A x^{m-1})_i &=\sum_{i_{2},\ldots,i_{m}\in [n]}a_{ii_{2}\ldots i_{m}}x_{i_{2}}\cdots x_{i_m}, i \in [n].
   \end{split}
   \]
 Let $\mathcal{I}=(i_{i_1i_2\ldots i_m})$ be the {\it identity tensor} of order $m$ and dimension $n$, that is, $i_{i_{1}i_2 \ldots i_{m}}=1$ if and only if
   $i_{1}=i_2=\cdots=i_{m} \in [n]$ and $i_{i_{1}i_2 \ldots i_{m}}=0$ otherwise.

\begin{definition}[\cite{Lim,Qi}]\label{eigen} Let $\A$ be an $m$-th order $n$-dimensional real tensor.
For some $\lambda \in \mathbb{C}$, if the polynomial system $(\lambda \mathcal{I}-\A)x^{m-1}=0$, or equivalently $\A x^{m-1}=\lambda x^{[m-1]}$, has a solution $x\in \mathbb{C}^{n}\backslash \{0\}$,
then $\lambda $ is called an eigenvalue of $\A$ and $x$ is an eigenvector of $\A$ associated with $\lambda$,
where $x^{[m-1]}:=(x_1^{m-1}, x_2^{m-1},\ldots,x_n^{m-1})$.
\end{definition}

The \emph{determinant} of $\A$, denoted by $\det \A$, is defined as the resultant of the polynomials $\A \x^{m-1}$ \cite{Ha},
and the \emph{characteristic polynomial} $\varphi_\A(\la)$ of $\A$ is defined as $\det(\la \I-\A)$ \cite{Qi,CPZ3}.
It is known that $\la$ is an eigenvalue of $\A$ if and only if it is a root of $\varphi_\A(\la)$.
The \emph{spectrum} of $\A$ is the multi-set of the roots of $\varphi_\A(\la)$.

The spectral symmetry of a connected hypergraph is closed related to a certain coloring of the hypergraph.



\begin{definition}[\cite{FHBZL}]\label{spe-ell-sym-graph}
Let $m \ge 2$ and $\ell \ge 2$ be integers such that $ \ell \mid  m$.
An $m$-uniform hypergraph $G$ on $n$ vertices is called $(m,\ell)$-colorable
if there exists a map $\phi:[n] \to [m]$ such that if $\{i_1,\ldots, i_m\} \in E(G)$, then
\begin{equation}\label{gen-col} \phi(i_1)+\cdots+\phi(i_m) \equiv \frac{m}{\ell} \mod m.\end{equation}
Such $\phi$ is called an $(m,\ell)$-coloring of $G$.
\end{definition}

If $m$ is even, an $m$-uniform hypergraph with an $(m,2)$-coloring was called {\it odd-colorable} by Nikiforov \cite{Ni}.

\begin{theorem} \label{sym-col-graph}\cite{FHBZL}
Let $G$ be a connected $m$-uniform hypergraph.
Then $G$ is spectral $\ell$-symmetric if and only if $G$ is $(m,\ell)$-colorable.
\end{theorem}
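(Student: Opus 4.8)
The plan is to connect the spectral $\ell$-symmetry condition $\S(\A(G)) = e^{\mathbf{i}\frac{2\pi}{\ell}}\S(\A(G))$ to the existence of a diagonal "gauge transformation" that rescales the adjacency tensor by the root of unity $e^{\mathbf{i}\frac{2\pi}{\ell}}$, and then to observe that such a transformation is exactly encoded by an $(m,\ell)$-coloring. First I would recall the standard behavior of the spectrum of a tensor under a diagonal similarity: if $D = \mathrm{diag}(d_1,\ldots,d_n)$ with each $d_j$ a complex number, then the tensor $\mathcal{B}$ with entries $b_{i_1\ldots i_m} = d_{i_1}^{-(m-1)} d_{i_2}\cdots d_{i_m}\, a_{i_1\ldots i_m}$ has the same characteristic polynomial, hence the same spectrum, as $\A(G)$ (this is the tensor analogue of $D^{-1}AD$; one checks it on the eigenvector equation $\A x^{m-1} = \lambda x^{[m-1]}$ by substituting $x = Dy$). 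Next, taking $d_j = e^{\mathbf{i}\frac{2\pi}{m}\phi(j)}$ for a candidate coloring $\phi:[n]\to[m]$, the scaling factor on a nonzero entry $a_{i_1\ldots i_m}$ (corresponding to an edge $\{v_{i_1},\ldots,v_{i_m}\}$) becomes $e^{\mathbf{i}\frac{2\pi}{m}(-(m-1)\phi(i_1) + \phi(i_2)+\cdots+\phi(i_m))}$; because the entries are fully symmetric one prefers the symmetric normalization, and using $-(m-1)\phi(i_1) \equiv \phi(i_1) \pmod m$ one sees the factor equals $e^{\mathbf{i}\frac{2\pi}{m}(\phi(i_1)+\cdots+\phi(i_m))}$ up to the symmetrization, which is the constant $e^{\mathbf{i}\frac{2\pi}{m}\cdot\frac{m}{\ell}} = e^{\mathbf{i}\frac{2\pi}{\ell}}$ precisely when $\phi$ is an $(m,\ell)$-coloring. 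Hence if $G$ is $(m,\ell)$-colorable, $\A(G)$ is diagonally similar to $e^{\mathbf{i}\frac{2\pi}{\ell}}\A(G)$, giving $\S(\A(G)) = e^{\mathbf{i}\frac{2\pi}{\ell}}\S(\A(G))$.

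For the converse I would use connectedness and weak irreducibility of $\A(G)$ together with a Perron–Frobenius-type argument. Since $G$ is connected, $\A(G)$ is weakly irreducible and nonnegative, so $\rho := \rho(\A(G))$ is an eigenvalue with a positive eigenvector. The hypothesis that the spectrum is invariant under multiplication by $e^{\mathbf{i}\frac{2\pi}{\ell}}$ forces $e^{\mathbf{i}\frac{2\pi}{\ell}}\rho$ to be an eigenvalue of modulus $\rho$ on the spectral circle; by the Perron–Frobenius theory of weakly irreducible nonnegative tensors (the structure of eigenvalues attaining the spectral radius), any such eigenvalue has an eigenvector of the form $x_j = (\text{positive scalar})\cdot e^{\mathbf{i}\theta_j}$, and comparing $\A x^{m-1} = e^{\mathbf{i}\frac{2\pi}{\ell}}\rho\, x^{[m-1]}$ edge by edge against the Perron equation $\A |x|^{m-1} = \rho |x|^{[m-1]}$ yields, for every edge $\{v_{i_1},\ldots,v_{i_m}\}$, the relation $\theta_{i_1}+\cdots+\theta_{i_m} \equiv \frac{2\pi}{m}\cdot\frac{m}{\ell} \pmod{2\pi}$ after normalizing the phases. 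Walking along paths in the connected hypergraph and using that $\ell \mid m$ one shows the phases $\theta_j$ can be taken in $\frac{2\pi}{m}\Z$, and setting $\phi(j) := \frac{m}{2\pi}\theta_j \bmod m$ produces an $(m,\ell)$-coloring.

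The main obstacle I expect is the converse direction — specifically, upgrading the existence of a complex unimodular eigenvector for the eigenvalue $e^{\mathbf{i}\frac{2\pi}{\ell}}\rho$ into a well-defined \emph{integer} coloring: one must show (i) that the modulus of such an eigenvector is a scalar multiple of the Perron vector (so the "magnitude part" cancels cleanly and only phases remain), and (ii) that the phase relations $\theta_{i_1}+\cdots+\theta_{i_m}\equiv \frac{2\pi}{\ell}\pmod{2\pi}$ per edge, combined with connectedness, are consistent with a global choice of $\theta_j\in\frac{2\pi}{m}\Z$. Step (i) is where weak irreducibility is essential; I would cite the established Perron–Frobenius results for nonnegative weakly irreducible tensors rather than reprove them. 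Step (ii) is a combinatorial rigidity argument: fix a spanning connected subhypergraph, propagate phases along a walk, and check that every edge constraint becomes an identity modulo $\frac{2\pi}{m}\Z$, which is exactly where the divisibility $\ell\mid m$ is used. The forward direction is, by contrast, a routine diagonal-similarity computation once the correct symmetric scaling is set up.
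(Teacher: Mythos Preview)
The paper does not give its own proof of this theorem; it is quoted from \cite{FHBZL} as a known result, so there is nothing in the paper to compare your proposal against directly. That said, your outline is correct and is essentially the standard argument (and almost certainly the one in \cite{FHBZL}): the forward direction is the diagonal-similarity computation you describe, and the converse comes from the structural part of the Perron--Frobenius theory for weakly irreducible nonnegative tensors---precisely Theorem~\ref{PF2} (taken with $\mathcal{B}=\A$) in this paper's Remark section, which hands you the identity $\A = e^{-\i\frac{2\pi}{\ell}} D^{-(m-1)}\A D$ with $D$ diagonal unimodular and thereby dispatches your worry (i) in one stroke.

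Your worry (ii) is also milder than you suggest. Comparing the $(i_1,i_2,\ldots,i_m)$ and $(i_2,i_1,\ldots,i_m)$ entries of that identity (using the symmetry of $\A$) gives $m(\theta_{i_1}-\theta_{i_2})\equiv 0 \pmod{2\pi}$ for any two vertices in a common edge; by connectedness all pairwise phase differences lie in $\frac{2\pi}{m}\Z$, and since shifting every $\theta_j$ by the same constant leaves the edge constraint $-(m-1)\theta_{i_1}+\theta_{i_2}+\cdots+\theta_{i_m}\equiv \frac{2\pi}{\ell}$ unchanged, you may translate so that all $\theta_j\in\frac{2\pi}{m}\Z$. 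Setting $\phi(j)=\frac{m}{2\pi}\theta_j \bmod m$ and using $-(m-1)\equiv 1\pmod m$ then gives exactly the $(m,\ell)$-coloring condition; no delicate propagation argument is needed, and the divisibility $\ell\mid m$ falls out of the edge constraint rather than being something you have to enforce separately.
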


The \emph{edge-vertex incidence matrix} $B_G=(b_{ev})$ of an $m$-uniform hypergraph $G$ is defined by
\begin{align*}
b_{ev}=\begin{cases}
1, & \mbox{if~} \ v\in e \in E(G),\\
0, & \textrm{otherwise}.
\end{cases}
\end{align*}
We may view $B_G$ as one over $\Z_m$, where $\Z_m$ is the ring of integers modulo $m$.
Now Eq. (\ref{gen-col}) is equivalent to
\begin{equation}
B_G \phi =\frac{m}{\ell} \mathbf{1} \textrm{~over~} \Z_m,
\end{equation}
where $\phi=(\phi(1),\ldots,\phi(n))$ is considered as a column vector, and $\mathbf{1}$ is an all-ones vector of dimension $n$.
So, Theorem \ref{sym-col-graph} can be rewritten as follows.

\begin{coro} \label{sym-Zm}
Let $G$ be a connected $m$-uniform hypergraph.
Then $G$ is spectral $\ell$-symmetric if and only if the equation
\begin{equation}
B_G x =\frac{m}{\ell} \mathbf{1} \textrm{~over~} \Z_m
\end{equation} has a solution.
\end{coro}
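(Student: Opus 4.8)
The plan is to obtain Corollary~\ref{sym-Zm} directly from Theorem~\ref{sym-col-graph} by rewriting the colouring condition (\ref{gen-col}) as a single linear system over the ring $\Z_m$. First I would unwind the definitions: by Theorem~\ref{sym-col-graph}, a connected $m$-uniform hypergraph $G$ on $n$ vertices is spectral $\ell$-symmetric if and only if it is $(m,\ell)$-colorable, i.e.\ there is a map $\phi:[n]\to[m]$ with $\phi(i_1)+\cdots+\phi(i_m)\equiv \frac{m}{\ell}\pmod m$ for every edge $\{i_1,\dots,i_m\}\in E(G)$. Implicitly one uses here that a connected spectral $\ell$-symmetric $m$-uniform hypergraph satisfies $\ell\mid m$ (as recalled in the introduction), so that $\frac{m}{\ell}$ is a well-defined element of $\Z_m$; the degenerate case $\ell=1$ is trivial on both sides, since $\frac{m}{1}\mathbf{1}=\mathbf{0}$ over $\Z_m$ and every spectrum is invariant under multiplication by $e^{\i 2\pi}=1$.

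Next I would identify the left-hand side of each congruence with a coordinate of a matrix--vector product. For a fixed edge $e=\{i_1,\dots,i_m\}$, the row of the incidence matrix $B_G$ indexed by $e$ has entry $1$ in column $v$ exactly when $v\in e$, and $0$ otherwise; hence, reading $B_G$ over $\Z$ and then reducing modulo $m$, the $e$-th coordinate of $B_G\phi$ equals $\sum_{v\in e}\phi(v)=\phi(i_1)+\cdots+\phi(i_m)$. Viewing $\phi$ as a column vector $x\in\Z_m^n$ (reduction modulo $m$ is a bijection $[m]\to\Z_m$ and therefore induces a bijection between maps $[n]\to[m]$ and vectors in $\Z_m^n$), the whole family of congruences (\ref{gen-col}), one per edge, is literally the single equation $B_G x=\frac{m}{\ell}\mathbf{1}$ over $\Z_m$. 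Thus an $(m,\ell)$-coloring of $G$ exists if and only if this equation is solvable over $\Z_m$.

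Combining the two steps finishes the proof: $G$ is spectral $\ell$-symmetric $\iff$ $G$ is $(m,\ell)$-colorable (Theorem~\ref{sym-col-graph}) $\iff$ $B_G x=\frac{m}{\ell}\mathbf{1}$ has a solution over $\Z_m$. There is essentially no genuine obstacle here; the statement is a bookkeeping reformulation of Theorem~\ref{sym-col-graph}, and the only points needing a line of care are the convention identifying $[m]$ with $\Z_m$ and the harmless boundary value $\ell=1$.
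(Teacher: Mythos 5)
Your proof is correct and matches the paper's own argument: the paper likewise derives Corollary~\ref{sym-Zm} by observing that the coloring condition (\ref{gen-col}) of Theorem~\ref{sym-col-graph} is, edge by edge, exactly the linear system $B_G x=\frac{m}{\ell}\mathbf{1}$ over $\Z_m$. Your extra remarks on $\ell\mid m$ and the case $\ell=1$ are harmless bookkeeping that the paper leaves implicit.
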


In Corollary \ref{sym-Zm} and other places of the paper, the number of coordinates of $\mathbf{1}$ is implicated from context, which is equal to the number of vertices of the hypergraph under discussion.

\section{Cyclic index of generalized power hypergraphs}
Let $G$ be a $t$-uniform hypergraph, and let $G^{m,s}$ be a generalized power hypergraph of $G$, where $1 \le s \le \frac{m}{t}$.
If $m > st$, then each edge of $G$ contains a vertex of degree $1$, and hence $G$ is a $1$-hm bipartite hypergraph \cite{SQH}.
By \cite[Theorem 3.2]{SQH} or \cite[Theorem 4.5]{FHBZL}, $c(G^{m,s})=m$.

So, in the following, we always assume that $G$ is a connected $t$-uniform hypergraph, $m=st$,
namely, $G^{m,s}$ is considered to be obtained from $G$ by blowing each vertex $v$ into an $s$-set $\v$ and preserving the adjacency relation.
We also assume that the vertex $v$ is contained in $\v$ for each $v \in V(G)$.

\begin{lemma}\label{sym-G-power}
If $G$ is spectral $\ell$-symmetric, then $G^{m,s}$ is also spectral $\ell$-symmetric.
In particular, $G^{m,s}$ is spectral $c(G)$-symmetric and hence $c(G) | c(G^{m,s})$.
\end{lemma}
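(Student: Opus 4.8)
The plan is to use the colorability characterization (Theorem \ref{sym-col-graph}, or equivalently Corollary \ref{sym-Zm}) to transport an $(m,\ell)$-coloring of $G^{m,s}$ from an $(t,\ell)$-coloring of $G$. Since $G$ is spectral $\ell$-symmetric, we have $\ell \mid t$, and by Theorem \ref{sym-col-graph} there is a map $\psi:V(G)\to[t]$ with $\sum_{v\in e}\psi(v)\equiv \frac{t}{\ell}\pmod t$ for every $e\in E(G)$. Because $m=st$, we also have $\ell\mid m$. First I would define a candidate coloring $\phi$ on the vertex set of $G^{m,s}$: for each $v\in V(G)$ and each of the $s$ copies in $\v$, assign the value $s\cdot\psi(v)$ (viewed in $[m]$, i.e. modulo $m$). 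Here I use the standing assumption from the paragraph before the lemma that each original vertex $v$ lies in $\v$, so every vertex of $G^{m,s}$ lies in exactly one $s$-set $\v$ and the assignment is well defined.

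Next I would verify the coloring condition on an arbitrary edge of $G^{m,s}$. By Definition \ref{powerhyper} (with $m-ts=0$), an edge of $G^{m,s}$ has the form $\u_1\cup\cdots\cup\u_t$ where $e=\{u_1,\ldots,u_t\}\in E(G)$, and it contains exactly the $s$ vertices of each $\u_j$. The sum of $\phi$ over this edge is therefore
\begin{equation*}
\sum_{j=1}^{t} s\cdot\big(s\,\psi(u_j)\big) \;=\; s^2 \sum_{j=1}^{t}\psi(u_j) \;\equiv\; s^2\cdot\frac{t}{\ell} \pmod m.
\end{equation*}
Now $s^2\cdot\frac{t}{\ell} = s\cdot\frac{st}{\ell} = s\cdot\frac{m}{\ell}$. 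I would want this to equal $\frac{m}{\ell}$ modulo $m$; instead it is $s$ times that. So the naive scaling by $s$ is slightly off, and the fix is to not rescale: take $\phi(\text{any copy of }v) := \psi(v)$ directly, but then interpret $\psi(v)\in[t]\subseteq[m]$. Then the edge sum is $\sum_{j=1}^t \psi(u_j)$, counting each of the $s$ copies in $\u_j$, which gives $s\sum_{j}\psi(u_j)\equiv s\cdot\frac{t}{\ell}=\frac{m}{\ell}\pmod{m}$. This is exactly the required congruence in Eq. (\ref{gen-col}), so $\phi$ is an $(m,\ell)$-coloring of $G^{m,s}$.

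Finally, $G^{m,s}$ is connected (it is obtained from the connected $G$ by blowing up vertices, preserving the walk structure), so Theorem \ref{sym-col-graph} applies in the reverse direction: the existence of an $(m,\ell)$-coloring gives that $G^{m,s}$ is spectral $\ell$-symmetric. Applying this with $\ell=c(G)$ yields that $G^{m,s}$ is spectral $c(G)$-symmetric, and then the divisibility fact quoted in the excerpt — if a tensor is spectral $\ell$-symmetric then $\ell\mid c(\A)$, here via \cite[Lemma 2.7]{FHBZL} — gives $c(G)\mid c(G^{m,s})$. The only genuinely delicate point is getting the scaling of the coloring right so that the factor of $s$ coming from the $s$ copies of each vertex in an edge cancels correctly modulo $m$; everything else is a direct translation through the colorability criterion. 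One should also double-check the edge cases $\ell=1$ (trivial, as every map is a $(t,1)$-coloring after noting $\sum\psi\equiv 0\equiv\frac{m}{1}$ is not required — rather $\ell=1$ means no condition) and confirm that $\psi$ taking values in $[t]$ rather than $\{0,\ldots,t-1\}$ causes no issue modulo $m$ since $[t]$ and $\{0,\dots,t-1\}$ represent the same residues when we also allow the value $t\equiv 0$.
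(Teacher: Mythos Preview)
Your corrected argument---assigning $\psi(v)$ to every vertex in $\v$ and checking that the edge sum becomes $s\sum_j \psi(u_j)\equiv s\cdot\frac{t}{\ell}=\frac{m}{\ell}\pmod m$---is exactly the paper's proof, which writes the same computation in incidence-matrix form as $B_{G^{m,s}}\Phi=s\cdot B_G\phi=\frac{m}{\ell}\mathbf{1}$ over $\Z_m$ with $\Phi|_{\v}=\phi(v)$. The initial false start with the extra factor of $s$ is unnecessary but harmless; after the fix, your proof is correct and identical in substance to the paper's.
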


\begin{proof}
Suppose that $G$ is spectral $\ell$-symmetric.
By Corollary \ref{sym-Zm}, the equation $B_G x =\frac{t}{\ell} \mathbf{1}$ has a solution $\phi$ over $\Z_t$.
Now define a map $\Phi$ on $G^{m,s}$ such that $\Phi|_{\v}=\phi(v)$ for each vertex $v \in V(G)$.
Then
$$ B_{G^{m,s}} \Phi =s \cdot B_G \phi=\frac{st}{\ell} \mathbf{1} = \frac{m}{\ell} \mathbf{1} \textrm{~over~} \Z_m,$$
which implies that $G^{m,s}$ is spectral $\ell$-symmetric also by Corollary \ref{sym-Zm}.
\end{proof}

\begin{lemma}\label{sym-s}
$G^{m,s}$ is spectral $s$-symmetric.
\end{lemma}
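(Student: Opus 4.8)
The plan is to exhibit an explicit $(m,s)$-coloring of $G^{m,s}$ and invoke Theorem~\ref{sym-col-graph} (or equivalently Corollary~\ref{sym-Zm}). The key observation is that in $G^{m,s}$ every edge is a union $\u_1 \cup \cdots \cup \u_t$ of exactly $t$ of the blown-up $s$-sets, so an edge contains $st = m$ vertices partitioned into $t$ groups of size $s$. I want a map $\Phi: V(G^{m,s}) \to [m] = \Z_m$ that behaves well on each individual $s$-set, independently of which edges that $s$-set participates in, so that the edge sum is automatically controlled.

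First I would define $\Phi$ locally: for each $v \in V(G)$, label the $s$ vertices of $\v$ by the residues $1, 2, \ldots, s \pmod{m}$ (in any order). Then for a single $s$-set $\v$ we have $\sum_{w \in \v} \Phi(w) \equiv 1 + 2 + \cdots + s = \binom{s+1}{2} \pmod m$. Consequently, for any edge $e = \{u_1,\ldots,u_t\} \in E(G)$ with corresponding edge $\u_1 \cup \cdots \cup \u_t$ in $G^{m,s}$, the total coloring sum is
\begin{equation*}
\sum_{w \in \u_1 \cup \cdots \cup \u_t} \Phi(w) \equiv t \cdot \binom{s+1}{2} = \frac{ts(s+1)}{2} \pmod m.
\end{equation*}
To conclude spectral $s$-symmetry via Definition~\ref{spe-ell-sym-graph} with $\ell = s$, I need this common value to equal $\frac{m}{s} = t$ modulo $m$; that is, I need $\frac{ts(s+1)}{2} \equiv t \pmod{st}$, equivalently (dividing the congruence and its modulus by $t$) $\frac{s(s+1)}{2} \equiv 1 \pmod{s}$, i.e. $\frac{s+1}{2} \cdot s \equiv 1$, which does \emph{not} hold in general — so a naive uniform labeling needs adjustment.

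The fix, and the step I expect to require the most care, is to choose the labels on each $\v$ so that $\sum_{w \in \v}\Phi(w)$ is congruent to a prescribed residue whose $t$-fold multiple is $\frac{m}{s}$. Concretely, since the $s$ labels on a fixed $\v$ range over an arbitrary multiset of residues, I can instead assign $s-1$ of them the value $0$ and the last one a value $a \in \Z_m$ with $\Phi$ constant on edges; then the edge sum is $ta \pmod{st}$, and I need $ta \equiv t \pmod{st}$, which holds for $a = 1$. Thus the cleanest choice is: on each $s$-set $\v$ assign one distinguished vertex the color $1$ and all other $s-1$ vertices the color $0$. Then for every edge of $G^{m,s}$ the coloring sum is exactly $t = \frac{m}{s} \pmod m$, so $\Phi$ is an $(m,s)$-coloring of $G^{m,s}$, and by Theorem~\ref{sym-col-graph} the connected hypergraph $G^{m,s}$ is spectral $s$-symmetric. (In the language of Corollary~\ref{sym-Zm}, this $\Phi$ is precisely a solution of $B_{G^{m,s}}\, x = t\,\mathbf{1}$ over $\Z_m$, obtained by placing a $1$ on one vertex of each fiber $\v$ and $0$ elsewhere.) The main obstacle is purely the bookkeeping of getting the target residue right; once the reduction to "one $1$ per fiber" is made, connectedness of $G^{m,s}$ (inherited from that of $G$, since $s \ge 1$) and weak irreducibility make the hypothesis of Theorem~\ref{sym-col-graph} applicable and the argument closes.
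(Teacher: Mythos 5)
Your final construction --- assigning the color $1$ to one distinguished vertex in each fiber $\v$ and $0$ to the remaining $s-1$ vertices, so that every edge sum is $t=\frac{m}{s}$ modulo $m$ --- is exactly the coloring used in the paper's proof, invoked through the same Corollary~\ref{sym-Zm}. The preliminary detour through the labeling $1,2,\dots,s$ is harmless, and the argument you settle on is correct and essentially identical to the paper's.
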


\begin{proof}
For each vertex $v \in V(G)$, $v$ is blowing into an $s$-set $\v$ of vertices of $G^{m,s}$, and is assumed to be contained in $\v$.
Define a map $\Phi$ on $G^{m,s}$ such that $\Phi(v)=1$ and $\Phi|_{\v \backslash \{v\}}=0$ for each vertex $v \in V(G)$.
Then
$$ B_{G^{m,s}} \Phi =t \mathbf{1} = \frac{m}{s} \mathbf{1} \textrm{~over~} \Z_m,$$
which implies that $G^{m,s}$ is spectral $s$-symmetric by Corollary \ref{sym-Zm}.
\end{proof}

\begin{lemma}\label{sym-power-G}
If $G^{m,s}$ is spectral $s \cdot \ell'$-symmetric, then $G$ is spectral $\ell'$-symmetric.
\end{lemma}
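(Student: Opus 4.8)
The plan is to run the argument of Lemma~\ref{sym-G-power} in reverse, using Corollary~\ref{sym-Zm} to translate the spectral hypotheses into solvability of linear systems over the appropriate $\Z_m$ or $\Z_t$, and then to project a solution on $G^{m,s}$ down to a solution on $G$. Concretely, suppose $G^{m,s}$ is spectral $s\ell'$-symmetric. Since $m=st$, we have $s\ell' \mid m$ iff $\ell' \mid t$, so the hypothesis makes sense precisely when $\ell' \mid t$; by Corollary~\ref{sym-Zm} there is a map $\Psi:V(G^{m,s}) \to \Z_m$ with
$$ B_{G^{m,s}}\Psi = \frac{m}{s\ell'}\mathbf{1} = \frac{t}{\ell'}\mathbf{1} \textrm{~over~} \Z_m. $$
The goal is to produce $\psi:V(G)\to\Z_t$ with $B_G\psi = \frac{t}{\ell'}\mathbf{1}$ over $\Z_t$, which by Corollary~\ref{sym-Zm} gives that $G$ is spectral $\ell'$-symmetric.

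First I would examine a single edge $\mathbf{e}'=\u_1\cup\cdots\cup\u_t$ of $G^{m,s}$ coming from $e=\{u_1,\ldots,u_t\}\in E(G)$, where $\u_i$ is the $s$-set replacing $u_i$. The equation at this edge reads $\sum_{i=1}^{t}\sigma(\u_i)\equiv \frac{t}{\ell'}\pmod m$, where $\sigma(\u_i):=\sum_{w\in\u_i}\Psi(w)$ is the block sum. So if I set $\psi(u):=\sigma(\u)\bmod t$ for each $u\in V(G)$, then for every edge $e$ of $G$,
$$ \sum_{u\in e}\psi(u) \equiv \sum_{i=1}^{t}\sigma(\u_i) \equiv \frac{t}{\ell'} \pmod m, $$
and reducing mod $t$ (legitimate since $t\mid m$) gives $\sum_{u\in e}\psi(u)\equiv \frac{t}{\ell'}\pmod t$, i.e. $B_G\psi=\frac{t}{\ell'}\mathbf{1}$ over $\Z_t$. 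That is exactly the required $(t,\ell')$-coloring of $G$.

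I expect the only subtle point is bookkeeping about which ring the various congruences live in: the block sums $\sigma(\u)$ are naturally elements of $\Z_m$, and one must check that $\frac{t}{\ell'}$ is the correct constant on both sides and that the mod-$t$ reduction of $\frac{t}{\ell'}$ (viewed in $\Z_m$) agrees with $\frac{t}{\ell'}$ (viewed in $\Z_t$) — both are $0$ unless $\ell'=1$, and in general $\frac{m}{s\ell'}=\frac{t}{\ell'}$ reduces correctly because $s\ell'\mid m$. There is no genuine obstacle here: unlike the converse-type statement (that $c(G^{m,s})=s\cdot c(G)$), which fails, this direction only requires collapsing each blown-up $s$-set to a single scalar, and the $G$-edge equations are implied verbatim by the $G^{m,s}$-edge equations after that collapse. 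The "hardest" part is merely making sure the ambient modulus is $t$ (not $m$) in the conclusion, which is handled by the reduction map $\Z_m \twoheadrightarrow \Z_t$ and Corollary~\ref{sym-Zm} applied to $G$ with uniformity $t$.
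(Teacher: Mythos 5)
Your proposal is correct and is essentially the paper's own proof: both collapse each blown-up $s$-set to the sum of its $\Psi$-values, observe that the resulting map satisfies $B_G\psi=\frac{t}{\ell'}\mathbf{1}$ over $\Z_m$, and reduce modulo $t$ to invoke Corollary~\ref{sym-Zm} for $G$. (Your parenthetical that both residues ``are $0$ unless $\ell'=1$'' has it backwards --- $\frac{t}{\ell'}\equiv 0 \pmod t$ exactly when $\ell'=1$ --- but this side remark does not affect the argument.)
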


\begin{proof}
By Corollary \ref{sym-Zm}, there exists a map $\Phi$ defined on $G^{m,s}$ such that
$$ B_{G^{m,s}} \Phi =\frac{m}{s \cdot \ell'} \mathbf{1}=\frac{t}{\ell'} \mathbf{1} \textrm{~over~} \Z_m.$$
Now define a map $\phi$ on $G$ such that $\phi(v)=\sum_{u \in \v} \Phi(u)$ for each $v \in V(G)$.
So we have
$$ B_{G} \phi =B_{G^{m,s}} \Phi= \frac{t}{\ell'} \mathbf{1} \textrm{~over~} \Z_m.$$
As $m$ is a multiple of $t$,
$$ B_{G} \phi = \frac{t}{\ell'} \mathbf{1} \textrm{~over~} \Z_t,$$
which implies that $G$ is spectral $\ell'$-symmetric by Corollary \ref{sym-Zm}.
\end{proof}

By Lemma \ref{sym-s}, we may assume $c(G^{m,s})=s \cdot \ell'$, where $\ell'$ is a positive integer.
By Lemma \ref{sym-power-G}, we know that $G$ is spectral $\ell'$-symmetric and hence $\ell'|c(G)$ by \cite[Lemma 2.7]{FHBZL}.
So we get the following result immediately.

\begin{coro}\label{divide}
$c(G^{m,s}) | s \cdot c(G)$.
\end{coro}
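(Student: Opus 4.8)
The plan is to chain together the three divisibility facts already established in this section. First I would invoke Lemma \ref{sym-s}: it says $G^{m,s}$ is spectral $s$-symmetric, so by \cite[Lemma 2.7]{FHBZL} we get $s \mid c(G^{m,s})$. Hence I may write $c(G^{m,s}) = s \cdot \ell'$ for some positive integer $\ell'$, and by Definition \ref{ell-sym} the hypergraph $G^{m,s}$ is spectral $(s\ell')$-symmetric (being spectral $c(G^{m,s})$-symmetric).

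Next I would feed this back into Lemma \ref{sym-power-G} with this particular $\ell'$: since $G^{m,s}$ is spectral $(s\ell')$-symmetric, that lemma gives that $G$ itself is spectral $\ell'$-symmetric. Applying \cite[Lemma 2.7]{FHBZL} once more yields $\ell' \mid c(G)$. Multiplying this divisibility through by $s$ gives $s\ell' \mid s\,c(G)$, that is, $c(G^{m,s}) \mid s\,c(G)$, which is the assertion.

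The argument is essentially bookkeeping once Lemmas \ref{sym-s} and \ref{sym-power-G} are in hand, so I do not anticipate a genuine obstacle here. The one point that is not purely formal is the passage from ``$G^{m,s}$ is spectral $s$-symmetric'' to ``$s \mid c(G^{m,s})$'': this is exactly where \cite[Lemma 2.7]{FHBZL} is needed, and it is the sole nontrivial input, since without it one could not legitimately factor out $s$ and then run the descent lemma on the complementary factor $\ell'$. This corollary is therefore recorded as an immediate consequence of the preceding lemmas, for use in the sections that follow.
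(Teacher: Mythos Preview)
Your argument is correct and matches the paper's own reasoning essentially verbatim: the paper also uses Lemma~\ref{sym-s} to write $c(G^{m,s})=s\cdot\ell'$, then Lemma~\ref{sym-power-G} together with \cite[Lemma~2.7]{FHBZL} to conclude $\ell'\mid c(G)$. The only difference is cosmetic: you make the first invocation of \cite[Lemma~2.7]{FHBZL} (to get $s\mid c(G^{m,s})$) explicit, whereas the paper leaves it implicit.
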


\begin{coro}\label{part-conj}
$G^{m,s}$ is spectral $\frac{s \cdot c(G)}{(s,c(G))}$-symmetric.
In particular, if $(s,c(G))=1$ or $(s,t)=1$, then $c(G^{m,s})= s \cdot c(G)$.
\end{coro}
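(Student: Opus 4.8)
The plan is to upgrade Lemmas \ref{sym-G-power} and \ref{sym-s} from two separate symmetries into a single statement about their least common multiple, and then combine this with the divisibility bound of Corollary \ref{divide}. First recall that if a tensor $\A$ is spectral $\ell$-symmetric then $\ell \mid c(\A)$ by \cite[Lemma 2.7]{FHBZL}, and conversely, directly from Definition \ref{ell-sym}, if $\A$ is spectral $\ell$-symmetric and $\ell' \mid \ell$ then $\A$ is spectral $\ell'$-symmetric (iterate the identity $\S(\A)=e^{\i \frac{2\pi}{\ell}}\S(\A)$ a total of $\ell/\ell'$ times). Applying this to $\A(G^{m,s})$: Lemma \ref{sym-s} gives $s \mid c(G^{m,s})$ and Lemma \ref{sym-G-power} gives $c(G) \mid c(G^{m,s})$, hence $\mathrm{lcm}(s,c(G)) \mid c(G^{m,s})$; since $G^{m,s}$ is spectral $c(G^{m,s})$-symmetric, it is therefore spectral $\mathrm{lcm}(s,c(G))$-symmetric. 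As $\mathrm{lcm}(s,c(G))=\frac{s\cdot c(G)}{(s,c(G))}$, this is exactly the first assertion.

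Alternatively, and perhaps more in the spirit of the rest of this section, I would prove the first assertion by combining colorings via Corollary \ref{sym-Zm}: take $\Phi_1$ on $G^{m,s}$ with $B_{G^{m,s}}\Phi_1=\frac{m}{c(G)}\mathbf{1}$ over $\Z_m$ (built from a $(t,c(G))$-coloring of $G$ exactly as in the proof of Lemma \ref{sym-G-power}) and $\Phi_2$ with $B_{G^{m,s}}\Phi_2=\frac{m}{s}\mathbf{1}$ over $\Z_m$ (the coloring from the proof of Lemma \ref{sym-s}), choose integers $a,b$ with $as+b\,c(G)=(s,c(G))$, and verify that $\Phi:=a\Phi_1+b\Phi_2$ satisfies $B_{G^{m,s}}\Phi=\frac{m}{\mathrm{lcm}(s,c(G))}\mathbf{1}$ over $\Z_m$, so that $G^{m,s}$ is spectral $\mathrm{lcm}(s,c(G))$-symmetric by Corollary \ref{sym-Zm}.

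For the ``in particular'' clause: if $(s,c(G))=1$ then $\frac{s\cdot c(G)}{(s,c(G))}=s\cdot c(G)$, so $G^{m,s}$ is spectral $s\cdot c(G)$-symmetric, i.e. $s\cdot c(G)\mid c(G^{m,s})$; combined with $c(G^{m,s})\mid s\cdot c(G)$ from Corollary \ref{divide}, this forces $c(G^{m,s})=s\cdot c(G)$. Finally, if $(s,t)=1$, then since $G$ is a connected $t$-uniform hypergraph we have $c(G)\mid t$ (recorded in the introduction from \cite{FHBZL,FBH,YY2}), so $(s,c(G))$ divides $(s,t)=1$ and we are reduced to the previous case.

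The argument is short because all the substantive content already resides in Lemmas \ref{sym-s} and \ref{sym-G-power} and Corollary \ref{divide}; the only point requiring a moment's care is the passage from ``spectral $s$-symmetric and spectral $c(G)$-symmetric'' to ``spectral $\mathrm{lcm}(s,c(G))$-symmetric'', which is not formally automatic for an arbitrary subset of $\mathbb{C}$ but holds here precisely because $c(\A)$ is by definition the \emph{maximal} symmetry order and every symmetry order divides it. I expect no real obstacle beyond keeping the modulus conventions ($\Z_t$ versus $\Z_m$) straight in the coloring version.
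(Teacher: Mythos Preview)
Your proof is correct and follows essentially the same route as the paper: deduce $s\mid c(G^{m,s})$ and $c(G)\mid c(G^{m,s})$ from Lemmas \ref{sym-s} and \ref{sym-G-power}, pass to the lcm, and then invoke Corollary \ref{divide} for the coprime cases, with $c(G)\mid t$ handling $(s,t)=1$. Your explicit justification of the step ``$\ell\mid c(\A)\Rightarrow$ spectral $\ell$-symmetric'' and the optional B\'ezout-coloring construction are welcome elaborations but not needed beyond what the paper uses.
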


\begin{proof}
By Lemma \ref{sym-G-power} and Lemma \ref{sym-s}, we know that $c(G) | c(G^{m,s})$ and $s | c(G^{m,s})$, implying that $\frac{s \cdot c(G)}{(s,c(G))}|c(G^{m,s})$.
So, $G^{m,s}$ is spectral $\frac{s \cdot c(G)}{(s,c(G))}$-symmetric.
As $c(G)|t$, if $(s,t)=1$, then $(s,c(G))=1$.
If $(s,c(G))=1$, then $ s \cdot c(G)| c(G^{m,s})$.
Then result follows by Corollary \ref{divide}.
\end{proof}

By Corollary \ref{part-conj}, Conjecture \ref{conj} holds in some special cases, including the case of $c(G)=1$.
However,  Conjecture \ref{conj} does not hold in general.
Now we give a counterexample to show the negative answer to the conjecture.

\begin{definition}[\cite{Ni}]
Let $n \ge 16k$ and let partition $[n]$ into three sets $A, B, C$ such that $|A| \ge 6k$, $|B| \ge 6k$ and $|C| \ge 4k$.
Define the four families of $4k$-subsets of $[n]$.
\begin{align*}
E_1&:= \{e: e \subset [n], |e \cap A|=2k, |e \cap C|=2k\}.\\
E_2&:=\{e: e \subset [n], |e \cap B|=2k, |e \cap C|=2k\}.\\
E_3&:=\{e: e \subset [n], |e \cap A|=k, |e \cap B|=3k\}.\\
E_4&:=\{e: e \subset [n], |e \cap A|=3k, |e \cap B|=k\}.
\end{align*}
Now define a $4k$-uniform hypergraph $G$ by setting $V(G)=[n]$ and $E(G)=E_1 \cup E_2 \cup E_3 \cup E_4$.
We call $G$ a {\it Nikiforov's hypergraph} as it is introduced by Nikiforov.
\end{definition}

Nikiforov \cite{Ni} showed that Nikiforov's hypergraphs $G$ are odd-colorable, or $(4k,2)$-colorable in terms our definition, by defining a function $\phi$ on $G$ such that
$\phi|_A=1$, $\phi|_B=4k-1$ and $\phi|_C=0$.
By Theorem \ref{sym-col-graph}, $G$ is spectral $2$-symmetric.

By the following result, if $G$ is a Nikiforov's hypergraph and $s$ is even, then
$$c(G^{m,s}) \ne s \cdot c(G).$$
So we give a negative answer to Conjecture \ref{conj}.

\begin{theorem}
Let $G$ be a $4k$-uniform Nikiforov's hypergraph.
Then the following results hold.

\begin{enumerate}
\item $c(G)=2$.

\item If $s$ is even, then $c(G^{m,s})=s$.

\end{enumerate}

\end{theorem}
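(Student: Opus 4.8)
The plan is to establish the two parts separately, using Corollary~\ref{sym-Zm} (the $\Z_m$-equation criterion) as the main tool throughout. For part~(1), I first note that Nikiforov already exhibited a $(4k,2)$-coloring of $G$, so by Theorem~\ref{sym-col-graph} $G$ is spectral $2$-symmetric and hence $2 \mid c(G)$. Since $c(G) \mid 4k$, it remains to rule out $c(G) > 2$, and for this it suffices to show $G$ is \emph{not} spectral $p$-symmetric for any prime $p$ with $2 < p \mid 4k$; equivalently, by Corollary~\ref{sym-Zm}, that $B_G x = \frac{4k}{p}\mathbf{1}$ has no solution over $\Z_{4k}$. The key structural fact I would exploit is that $G$ is a very dense, highly symmetric hypergraph: within each of the parts $A$, $B$, $C$, the edges of $G$ are invariant under all permutations of that part. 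I would argue that any coloring $\phi$ forcing a fixed edge-sum must assign the same value to all vertices in a given part — concretely, if $u,v$ lie in the same part, one can find an edge containing $u$ but not $v$ and swap them to produce another edge, which forces $\phi(u) \equiv \phi(v)$. So $\phi$ is determined by three values $a = \phi|_A$, $b = \phi|_B$, $c = \phi|_C$, and the four edge families yield the congruences $2k\,a + 2k\,c \equiv \frac{4k}{\ell}$, $2k\,b + 2k\,c \equiv \frac{4k}{\ell}$, $k\,a + 3k\,b \equiv \frac{4k}{\ell}$, $3k\,a + k\,b \equiv \frac{4k}{\ell}$ modulo $4k$. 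Adding the last two gives $4k(a+b) \equiv 0$, which is automatic; subtracting them gives $2k(a-b) \equiv 0$, so $a \equiv b \pmod 2$. Analyzing this small linear system over $\Z_{4k}$ — in particular reducing modulo the odd part of $4k$ — I expect to find that $\frac{4k}{\ell}$ must be even, forcing $\ell \mid 2$, hence $c(G) = 2$.

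For part~(2), assume $s$ is even and write $m = 4ks$. By Lemma~\ref{sym-s}, $G^{m,s}$ is spectral $s$-symmetric, so $s \mid c(G^{m,s})$; by Corollary~\ref{divide}, $c(G^{m,s}) \mid s \cdot c(G) = 2s$. Thus $c(G^{m,s})$ is either $s$ or $2s$, and I must rule out $2s$. Suppose $G^{m,s}$ is spectral $2s$-symmetric. By Lemma~\ref{sym-power-G} (with $\ell' = 2$) this is consistent with $G$ being spectral $2$-symmetric, so that lemma alone does not finish it; instead I would go back to Corollary~\ref{sym-Zm} directly for $G^{m,s}$. There must be a map $\Phi$ on $V(G^{m,s})$ with $B_{G^{m,s}}\Phi = \frac{m}{2s}\mathbf{1} = \frac{4k}{2}\mathbf{1} = 2k\,\mathbf{1}$ over $\Z_{m}$. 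Using the same permutation-symmetry argument as in part~(1) — now the blow-up $G^{m,s}$ inherits the full symmetry within the blown-up parts $\mathbf{A} = \cup_{v\in A}\mathbf{v}$, $\mathbf{B}$, $\mathbf{C}$ — I would show $\Phi$ must be constant on $\mathbf{A}$, $\mathbf{B}$, $\mathbf{C}$ as well; call the values $a,b,c \in \Z_m$. Then each edge of $G^{m,s}$ has $2ks$ vertices from $\mathbf{A}$ and $2ks$ from $\mathbf{C}$ (for the family lifted from $E_1$), etc., giving $2ks\,a + 2ks\,c \equiv 2k$, $2ks\,b + 2ks\,c \equiv 2k$, $ks\,a + 3ks\,b \equiv 2k$, $3ks\,a + ks\,b \equiv 2k$ modulo $4ks$. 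The crucial point is that $s$ is even: every coefficient $ks$, $2ks$, $3ks$ on the left is divisible by $2k\cdot(s/2)\cdot 2 = 2ks$ — more usefully, each left-hand side is a sum of terms each divisible by $ks$, and since $s$ is even, $ks$ is even, so every left-hand side is $\equiv 0 \pmod{2k\cdot\gcd(\dots)}$... precisely, one computes that the left sides all lie in the subgroup $2ks\,\Z_m$ (because the coefficients $ks,2ks,3ks$ are all even multiples of $ks$ when $s$ is even — wait, $ks$ itself need not be even times $ks$; rather $2ks,\,ks+3ks=4ks\equiv0,\,3ks+ks\equiv0$). So I would instead just reduce the system modulo $2$ in a suitable quotient: the right side $2k$ versus the structure of the left side forces a contradiction since the lift of an odd-coloring value cannot be "spread" over an even set $\mathbf{v}$ to still hit an odd target.

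The main obstacle I anticipate is the constancy-on-parts step: I have sketched it as "swap two vertices in the same part along an edge," but one must verify that for \emph{every} pair $u,v$ in the same part (of $G$, and of $G^{m,s}$) there is actually an edge of the relevant family containing $u$ and not $v$ — this uses the size hypotheses $|A|\ge 6k$, $|B|\ge 6k$, $|C|\ge 4k$, and for $G^{m,s}$ one must check the analogous counting in the blow-up, which is where the bookkeeping is heaviest. The second delicate point is the precise modular arithmetic over $\Z_{4k}$ and $\Z_{4ks}$ when $k$ has its own odd and $2$-adic parts; I would handle this by the Chinese Remainder Theorem, splitting into the $2$-part and odd-part of the modulus, observing that on the odd part all the congruences are easily solvable (consistent with any $\ell$ dividing the odd part of $m$, but $2$-symmetry is the obstruction) and on the $2$-part the parity of $s$ in part~(2) — equivalently the parity of $\frac{m}{2s} = 2k$ being even — is exactly what blocks the finer symmetry. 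Once constancy-on-parts is in hand, both parts reduce to a $3$-unknown linear system over a cyclic ring, which is routine.
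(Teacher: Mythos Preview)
Your approach to part~(1) is essentially the paper's and is correct. One small slip: ruling out spectral $p$-symmetry for odd primes $p\mid 4k$ is not quite enough, since $c(G)=4$ must also be excluded; but in fact the $E_1$ equation alone already finishes things in one line, so no CRT is needed. The left side $2ka+2kc=2k(a+c)$ is congruent to $0$ or $2k$ modulo $4k$, hence $\tfrac{4k}{\ell}\in\{0,2k\}$ in $\Z_{4k}$, forcing $\ell\in\{1,2\}$.

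Part~(2) has a genuine gap. Your swap argument does \emph{not} show that $\Phi$ is constant on $\mathbf{A}=\bigcup_{v\in A}\mathbf{v}$: for two vertices $u_1,u_2$ lying in the \emph{same} blow-up set $\mathbf{v}$, every edge of $G^{m,s}$ containing $u_1$ also contains $u_2$ (each edge of $G^{m,s}$ is a union of whole blow-up sets), so there is no edge to swap along and nothing ties $\Phi(u_1)$ to $\Phi(u_2)$. Indeed, given any solution $\Phi$ one may add $1$ to $\Phi(u_1)$ and subtract $1$ from $\Phi(u_2)$ and still have a solution. Consequently the system you write with coefficients $ks,\,2ks,\,3ks$ is not what is actually forced, and the divisibility claims that follow (left sides lying in $2ks\,\Z_m$, etc.) are not valid.

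The fix, which is exactly what the paper does, is to collapse the blow-ups: set $\phi(v):=\sum_{u\in\mathbf{v}}\Phi(u)$ on $V(G)$. Because every edge of $G^{m,s}$ is a union of whole blow-ups, $B_{G^{m,s}}\Phi=B_G\phi$, so $B_G\phi=2k\,\mathbf{1}$ over $\Z_{4ks}$. Now the swap argument applied in $G$ (not in $G^{m,s}$) shows $\phi$ is constant on $A,B,C$; call the values $\alpha,\beta,\iota$. The families $E_3$ and $E_4$ give $k\alpha+3k\beta\equiv 2k$ and $3k\alpha+k\beta\equiv 2k\pmod{4ks}$, i.e.\ $\alpha+3\beta\equiv 2$ and $3\alpha+\beta\equiv 2\pmod{4s}$. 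Subtracting yields $\alpha\equiv\beta\pmod{2s}$; adding yields $\alpha+\beta\equiv 1\pmod{s}$. Together these force $2\alpha\equiv 1\pmod{s}$, impossible when $s$ is even.
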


\begin{proof}
(1) We first show that $c(G)=2$.
Suppose that $G$ is spectral $\ell$-symmetric.
Then there exists a $\phi: [n] \to [4k]$ such that $B_G \phi =\frac{4k}{\ell}$  over $\Z_{4k}$.
It is easily seen that $\phi$ is constant on each of $A, B, C$ by the equation.
So, let $\phi|_A:=a$, $\phi|_B:=b$ and $\phi|_C:=c$.
Then, by considering the edges in $E_1$, we have
$$  2k a + 2k c = \frac{4k}{\ell} \mod 4k,$$
which implies that $\ell$ equals $1$ or $2$, and hence $c(G)=2$ as $G$ is spectral $2$-symmetric.

(2) By Corollary \ref{divide}, $c(G^{m,s}) | 2s$, where $m=4ks$.
By Lemma \ref{sym-s}, $G^{m,s}$ is spectral $s$-symmetric, and hence $s | c(G^{m,s})$.
We will show that if $s$ is even, then $G^{m,s}$ is not spectral $2s$-symmetric so that $c(G^{m,s})=s$.

Assume to the contrary that $G^{m,s}$ is spectral $2s$-symmetric.
Then there exists a $\Phi: V(G^{m,s}) \to [4ks]$ such that
$$B_{G^{m,s}} \Phi =\frac{4ks}{2s}=2k  \textrm{~over~} \Z_{4ks}.$$
For each $v \in V(G)$, define $\phi(v):=\sum_{u \in \v} \Phi(u)$.
So we have
$$B_{G^{m,s}} \Phi = B_G \phi=2k  \textrm{~over~} \Z_{4ks}.$$
It is also easily seen that $\phi|_A:=\alpha$, $\phi|_B:=\beta$ and $\phi|_C:=\iota$.
By considering the edges in $E_3$ and $E_4$ respectively, we have
$$ \alpha + 3 \beta = 2 \mod 4s, ~~~ 3 \alpha+ \beta = 2 \mod 4s.$$
So
$$ \alpha-\beta= 0 \mod 2s, ~~~ \alpha + \beta = 1 \mod s,$$
which yields a contradiction as $s$ is an even number.
\end{proof}

Finally we give an equivalent characterization of Eq. (\ref{conjFm}) in Conjecture \ref{conj}.

\begin{theorem}
$c(G^{m,s})= s \cdot c(G)$ if and only if the equation
\begin{equation}\label{eq-power} B_G x = \frac{t}{c(G)} \mathbf{1} \textrm{~over~} \Z_m \end{equation}
has a solution.
\end{theorem}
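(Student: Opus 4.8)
The plan is to translate the identity $c(G^{m,s})=s\cdot c(G)$ first into a statement about spectral symmetry and then, via Corollary \ref{sym-Zm}, into the solvability of a linear system over $\Z_m$.

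First I would invoke Corollary \ref{divide}, which gives $c(G^{m,s})\mid s\cdot c(G)$ unconditionally. Hence $c(G^{m,s})=s\cdot c(G)$ holds if and only if $s\cdot c(G)\mid c(G^{m,s})$; and, recalling that a tensor is spectral $d$-symmetric for every divisor $d$ of its cyclic index while spectral $\ell$-symmetry forces $\ell\mid c$, this is equivalent to $G^{m,s}$ being spectral $s\cdot c(G)$-symmetric. Applying Corollary \ref{sym-Zm} to the connected $m$-uniform hypergraph $G^{m,s}$ and using $m=st$, the latter is equivalent to the solvability over $\Z_m$ of
\[ B_{G^{m,s}} x \;=\; \frac{m}{s\cdot c(G)}\,\mathbf{1} \;=\; \frac{t}{c(G)}\,\mathbf{1}. \]
So it remains to show that this system is solvable over $\Z_m$ exactly when $B_G x=\frac{t}{c(G)}\mathbf{1}$ is solvable over $\Z_m$.

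For that equivalence I would pass between colorings of $G$ and of $G^{m,s}$ by the two constructions already used in Lemmas \ref{sym-G-power} and \ref{sym-power-G}, but keeping the modulus equal to $m$ throughout (in particular the contraction step does not need the reduction to $\Z_t$). Given a solution $\phi$ of $B_G x=\frac{t}{c(G)}\mathbf{1}$ over $\Z_m$, put $\Phi(v)=\phi(v)$ and $\Phi|_{\v\setminus\{v\}}=0$ for each $v\in V(G)$; since every edge of $G^{m,s}$ has the form $\u_1\cup\cdots\cup\u_t$ with $\{u_1,\dots,u_t\}\in E(G)$, its coordinate in $B_{G^{m,s}}\Phi$ equals $\sum_{i=1}^t\Phi(u_i)=\sum_{i=1}^t\phi(u_i)$, the corresponding coordinate of $B_G\phi$. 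Conversely, given a solution $\Phi$ over $\Z_m$, put $\phi(v)=\sum_{u\in\v}\Phi(u)$; then the edge coordinate of $B_G\phi$ is $\sum_{i=1}^t\sum_{u\in\u_i}\Phi(u)$, which equals the edge coordinate of $B_{G^{m,s}}\Phi$ because $\u_1,\dots,\u_t$ partition the edge of $G^{m,s}$. In either direction the right-hand sides coincide, so one system has a solution precisely when the other does, and the theorem follows.

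I do not expect a genuine obstacle: after the reduction of the first two paragraphs the equivalence is a routine bookkeeping of edge sums, essentially identical to the arguments already given for Lemmas \ref{sym-G-power}--\ref{sym-power-G}. The point worth stressing is conceptual: Corollary \ref{sym-Zm} already guarantees that $B_G x=\frac{t}{c(G)}\mathbf{1}$ has a solution over $\Z_t$, so Eq. (\ref{eq-power}) is exactly the question of whether this canonical $c(G)$-coloring of $G$ can be lifted from $\Z_t$ to $\Z_m$, and the theorem asserts that Conjecture \ref{conj} holds iff such a lift exists. Along the way one should note that $\frac{t}{c(G)}$ is an integer because $c(G)\mid t$, and that $G^{m,s}$ is connected since $G$ is, so that Corollary \ref{sym-Zm} indeed applies to it.
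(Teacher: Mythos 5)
Your proposal is correct and follows essentially the same route as the paper: both directions pass through Corollary \ref{sym-Zm} to reduce the equality $c(G^{m,s})=s\cdot c(G)$ to spectral $s\cdot c(G)$-symmetry of $G^{m,s}$, and then translate between the linear systems for $G$ and $G^{m,s}$ over $\Z_m$ via exactly the contraction $\phi(v)=\sum_{u\in\v}\Phi(u)$ and the extension $\Phi(v)=\phi(v)$, $\Phi|_{\v\setminus\{v\}}=0$ that the paper uses, closing the loop with Corollary \ref{divide}.
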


\begin{proof}
Suppose that $c(G^{m,s})= s \cdot c(G)$.
Then $G^{m,s}$ is spectral $s \cdot c(G)$-symmetric, and by Corollary \ref{sym-Zm},
there exists a map $\Phi: V(G^{m,s}) \to [m]$ such that
$$B_{G^{m,s}} \Phi =\frac{m}{s \cdot c(G)}\mathbf{1}=\frac{t}{c(G)}\mathbf{1}  \textrm{~over~} \Z_{m}.$$
For each $v \in V(G)$, define $\phi(v):=\sum_{u \in \v} \Phi(u)$.
So we have $B_{G^{m,s}} \Phi = B_G \phi$, and get the necessity.

On the other hand, if $ B_G x = \frac{t}{c(G)} \mathbf{1}$ has a solution $\phi$ over $\Z_m$.
Define a map $\Psi: V(G^{m,s}) \to [m]$ such that
$$ \sum_{u \in \v} \Phi(u)= \phi(v), \textrm{~for each~} v \in V(G).$$
There are $|V(G)|$ independent linear equations; such $\Phi$ is easily got
(e.g. for each $v \in V(G)$, take $\Phi(v)=\phi(v)$ and $\Phi(u)=0$ for each $u \in \v \backslash \{v\}$).
So we have $$B_{G^{m,s}} \Phi = B_G \phi=\frac{t}{c(G)} \mathbf{1}=\frac{m}{s \cdot c(G)}\mathbf{1} \textrm{~over~} \Z_{m}.$$
So $G^{m,s}$ is spectral $ s \cdot c(G)$-symmetric.
The sufficiency follows by Corollary \ref{divide}.
\end{proof}

As $G$ is spectral $c(G)$-symmetric, by Corollary \ref{sym-Zm} the equation
\begin{equation} \label{eq-ori} B_G x=\frac{t}{c(G)} \mathbf{1} \textrm{~over~} \Z_t \end{equation}
has a solution.
Obviously, if the equation (\ref{eq-power}) has a solution, then the equation (\ref{eq-ori}) has a solution as $m$ is a multiple of $t$.
However, the converse does not hold in general; see the previous counterexample.

\section{Remark}
For a nonnegative weakly irreducible tensor $\A$, its cyclic index $c(\A)$ is exactly the number of eigenvalues with modulus $\rho(\A)$.
The is implied by Perron-Frobenius theorem for nonnegative tensors,
where an eigenvalue of $\A$ is called {\it $H^+$-eigenvalue} (respectively {\it $H^{++}$-eigenvalue}) if it is associated with
a nonnegative (respectively positive) eigenvector.
For the notion of irreducible or weakly irreducible tensors, one can refer to \cite{CPZ} and \cite{FGH}.
It is known that the adjacency tensor of a uniform hypergraph $G$ is weakly irreducible if and only if $G$ is connected \cite{PZ,YY3}.

\begin{theorem}[The Perron-Frobenius Theorem for nonnegative tensors]\label{PF}~~

\begin{enumerate}

\item{\em(Yang and Yang \cite{YY3})}  If $\A$ is a nonnegative tensor, then $\rho(\A)$ is an $H^+$-eigenvalue of $\A$.

\item{\em(Friedland, Gaubert and Han \cite{FGH})} If furthermore $\A$ is weakly irreducible, then $\rho(\A)$ is the unique $H^{++}$-eigenvalue of $\A$,
with a unique positive eigenvector, up to a positive scalar.

\item{\em(Chang, Pearson and Zhang \cite{CPZ})} If moreover $\A$ is irreducible, then $\rho(\A)$ is the unique $H^{+}$-eigenvalue of $\A$,
with a unique nonnegative eigenvector, up to a positive scalar.

\end{enumerate}

\end{theorem}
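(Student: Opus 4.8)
The three parts are classical results established in the cited papers, so rather than a full proof I describe the standard route through nonlinear Perron--Frobenius theory, establishing the statements in the order (3), (1), (2). The plan is to handle a strictly positive tensor first by a fixed-point argument, to pass to a general nonnegative $\A$ by perturbation, and to recover positivity and uniqueness in the weakly irreducible case by a graph-connectivity argument combined with a contraction estimate.

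First I would treat a tensor $\A$ all of whose entries are strictly positive. On the standard simplex $\Delta=\{x\ge 0:\sum_{i}x_i=1\}$ define the continuous self-map
\[(Tx)_i=\frac{(\A x^{m-1})_i^{1/(m-1)}}{\sum_{j}(\A x^{m-1})_j^{1/(m-1)}},\]
which is well defined because $\A x^{m-1}$ has positive coordinates for every $x\in\Delta$. Brouwer's fixed point theorem yields a fixed point $x^{\ast}\in\Delta$, and unwinding the definition gives $\A (x^{\ast})^{m-1}=\lambda (x^{\ast})^{[m-1]}$ for some $\lambda>0$ with $x^{\ast}>0$, so $\lambda$ is an $H^{++}$-eigenvalue. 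To identify $\lambda$ with $\rho(\A)$ I would establish the Collatz--Wielandt identity
\[\rho(\A)=\max_{x>0}\ \min_{i}\ \frac{(\A x^{m-1})_i}{x_i^{m-1}},\]
whose maximum is attained exactly at $x^{\ast}$. For irreducible $\A$ (part (3)), irreducibility forces any nonnegative eigenvector for $\rho(\A)$ to be strictly positive, since a vanishing coordinate propagates through the defining sums and collapses the whole vector, while uniqueness up to a positive scalar follows from the strict monotonicity of the Collatz--Wielandt functional.

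For part (1) I would use a perturbation argument. Let $\mathcal{J}$ be the tensor all of whose entries equal $1$ and set $\A_{\varepsilon}=\A+\varepsilon\mathcal{J}$, which is strictly positive. The previous step applies to $\A_{\varepsilon}$ and produces $x_{\varepsilon}\in\Delta$ with $\A_{\varepsilon}x_{\varepsilon}^{m-1}=\rho(\A_{\varepsilon})x_{\varepsilon}^{[m-1]}$ and $x_{\varepsilon}>0$. Since $\rho$ depends continuously on the entries, $\rho(\A_{\varepsilon})\to\rho(\A)$ as $\varepsilon\to 0$; by compactness of $\Delta$ I would extract a subsequence with $x_{\varepsilon}\to x_0\in\Delta$ and pass to the limit in the eigenvalue equation to obtain $\A x_0^{m-1}=\rho(\A)x_0^{[m-1]}$ with $x_0\ge 0$ and $x_0\ne 0$, which is the $H^{+}$-eigenvector claimed in (1).

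Part (2) is the intermediate weakly irreducible case, and this is where the real difficulty sits. Here I would work with the representation digraph $D(\A)$ on vertex set $[n]$ with an arc $i\to j$ whenever some entry $a_{i i_2\cdots i_m}>0$ has $j\in\{i_2,\dots,i_m\}$; weak irreducibility is precisely strong connectivity of $D(\A)$. Applied to the nonnegative eigenvector $x_0$ from part (1), if some coordinate of $x_0$ vanished, strong connectivity would force the zero to travel along directed paths to every vertex, contradicting $x_0\ne 0$, so $x_0>0$ and $\rho(\A)$ is an $H^{++}$-eigenvalue. The genuinely hard point is \emph{uniqueness}: that $\rho(\A)$ is the only eigenvalue admitting a positive eigenvector and that this eigenvector is unique up to a positive scalar. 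The clean linear Perron--Frobenius proofs do not transfer to the nonlinear setting, so I would instead invoke the order-preserving map $x\mapsto(\A x^{m-1})^{[1/(m-1)]}$ (coordinatewise $(m-1)$-th roots), which is nonexpansive and, under weak irreducibility, becomes a contraction in the Hilbert projective metric on the interior of the nonnegative cone, forcing a unique projective fixed point. I expect this uniqueness step to consume almost all of the work; everything else reduces to Brouwer's theorem, a continuity and compactness limit, and the graph-connectivity observation.
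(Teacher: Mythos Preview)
The paper does not prove this theorem at all: it is stated in Section~4 purely as a citation, with each of the three parts attributed to its original source (Yang--Yang, Friedland--Gaubert--Han, Chang--Pearson--Zhang) and no argument given. So there is no ``paper's own proof'' to compare against; the statement functions as background for the remarks that follow, not as something the authors establish.

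Your sketch is a reasonable outline of the standard nonlinear Perron--Frobenius route taken in those cited references: Brouwer on the simplex for the strictly positive case, perturbation by $\varepsilon\mathcal{J}$ and compactness for general nonnegative $\A$, propagation of zeros along the associated digraph for positivity under weak irreducibility, and the Hilbert projective metric contraction for uniqueness. That is essentially how the original papers proceed, and your identification of the uniqueness step in (2) as the hard part is accurate. But for the present paper no proof is expected or supplied, so the correct ``proof'' here is simply to cite \cite{YY3}, \cite{FGH}, and \cite{CPZ}.
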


According to the definition of tensor product in \cite{Shao}, for a tensor $\A$ of order $m$ and dimension $n$, and two diagonal matrices $P,Q$ both of dimension $n$,
the product $P\A Q$ has the same order and dimension as $\A$, whose entries are defined by
\[(P\A Q)_{i_1i_2\ldots i_m}=p_{i_1i_1}a_{i_1i_2\ldots i_m}q_{i_2i_2}\ldots q_{i_mi_m}.\]
If $P=Q^{-1}$, then $\A$ and $P^{m-1}\A Q$ are called {\it diagonal similar}.
It is proved that two diagonal similar tensors have the same spectrum \cite{Shao}.

\begin{theorem}[\cite{YY3}] \label{PF2}
 Let $\A$ and $\mathcal{B}$ be two $m$-th order $n$-dimensional real tensors with $|\mathcal{B}| \le  \A$,
 namely, $|b_{i_1 i_2 \ldots i_m}| \le a_{i_1 i_2 \ldots i_m}$ for each $i_j \in [n]$ and $j \in [m]$. Then

\begin{enumerate}

\item $\rho(\mathcal{B}) \le \rho(\A)$.

\item Furthermore, if $\A$ is weakly irreducible and $\rho(\mathcal{B}) = \rho(\A)$, where $\la=\rho(\A)e^{\i \theta}$ is an eigenvalue of $\mathcal{B}$ corresponding to an
eigenvector $y$,  then $y$ contains no zero entries, and $\mathcal{B}=e^{-\i \theta}D^{-(m-1)}\A D$, where $D=\hbox{diag}\{\frac{y_1}{|y_1|},\ldots,\frac{y_n}{|y_n|}\}$.
\end{enumerate}
\end{theorem}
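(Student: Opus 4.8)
The plan is to reproduce, in the tensor setting, the classical Perron--Frobenius comparison argument for nonnegative matrices, with the homogeneous map $x\mapsto\A x^{m-1}$ replacing the linear action and with Theorem \ref{PF} (and the Collatz--Wielandt description of $\rho(\A)$) standing in for left-eigenvector pairings. Throughout, for $x\in\mathbb{C}^n$ write $|x|=(|x_1|,\ldots,|x_n|)$, and observe that $|\mathcal{B}|\le\A$ forces $a_{i_1\ldots i_m}\ge|b_{i_1\ldots i_m}|\ge0$, so $\A$ is nonnegative. For part (1) I would pick an eigenvalue $\la$ of $\mathcal{B}$ of maximal modulus: since the spectrum is the finite set of roots of $\varphi_{\mathcal{B}}$, there is $\la$ with $|\la|=\rho(\mathcal{B})$ and a nonzero eigenvector $y$ satisfying $\mathcal{B}y^{m-1}=\la y^{[m-1]}$. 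Taking moduli coordinatewise and using the triangle inequality gives, for each $i$,
\[
\rho(\mathcal{B})\,|y_i|^{m-1}=\big|(\mathcal{B}y^{m-1})_i\big|\le\sum_{i_2,\ldots,i_m}|b_{ii_2\ldots i_m}|\,|y_{i_2}|\cdots|y_{i_m}|\le(\A|y|^{m-1})_i,
\]
i.e.\ $\A|y|^{m-1}\ge\rho(\mathcal{B})\,|y|^{[m-1]}$ with $|y|\ge0$, $|y|\ne0$. The bound $\rho(\A)\ge\rho(\mathcal{B})$ then follows from the Collatz--Wielandt lower bound for nonnegative tensors (a consequence of the theory summarized in Theorem \ref{PF}): if $\A z^{m-1}\ge\mu\,z^{[m-1]}$ for some $z\ge0$, $z\ne0$, then $\rho(\A)\ge\mu$.

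For part (2), assume $\A$ is weakly irreducible and $\rho(\mathcal{B})=\rho(\A)=:\rho$, with $\la=\rho e^{\i\theta}$ and eigenvector $y$. The chain above is now pinned between $\rho|y_i|^{m-1}$ and $(\A|y|^{m-1})_i$, so I must upgrade the global equality $\rho(\mathcal{B})=\rho(\A)$ to coordinatewise tightness. This uses a standard feature of the nonlinear Perron--Frobenius theory for weakly irreducible nonnegative tensors: a super-eigenvector at the spectral radius is already an eigenvector, i.e.\ $\A z^{m-1}\ge\rho(\A)z^{[m-1]}$ with $z\ge0$, $z\ne0$ forces equality. Applying this to $z=|y|$ yields $\A|y|^{m-1}=\rho|y|^{[m-1]}$, so $|y|$ is a nonnegative eigenvector of the weakly irreducible tensor $\A$ for its spectral radius; by Theorem \ref{PF}(2) together with weak irreducibility such an eigenvector has no zero entries. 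Hence $|y|>0$, which also proves that $y$ has no zero entries.

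Once $|y|>0$, the explicit form drops out of the two equalities. From $\sum|b_{ii_2\ldots i_m}|\,|y_{i_2}|\cdots|y_{i_m}|=(\A|y|^{m-1})_i$ with all $|y_{i_k}|>0$ and $|b_{ii_2\ldots i_m}|\le a_{ii_2\ldots i_m}$, I conclude $|b_{ii_2\ldots i_m}|=a_{ii_2\ldots i_m}$ for every index tuple, that is, $|\mathcal{B}|=\A$. From equality in the triangle inequality, all nonzero terms $b_{ii_2\ldots i_m}y_{i_2}\cdots y_{i_m}$ share the argument of their sum $\la y_i^{m-1}$; writing $d_j=y_j/|y_j|$ and $D=\mathrm{diag}(d_1,\ldots,d_n)$ and comparing phases gives
\[
b_{ii_2\ldots i_m}=a_{ii_2\ldots i_m}\,e^{\i\theta}\,d_i^{\,m-1}\,\overline{d_{i_2}}\cdots\overline{d_{i_m}}.
\]
Since $\mathcal{B}$ is real and $\overline{D}=D^{-1}$, the right-hand side equals $e^{-\i\theta}D^{-(m-1)}\A D$ in the product notation of \cite{Shao}, giving $\mathcal{B}=e^{-\i\theta}D^{-(m-1)}\A D$; as a consistency check this exhibits $\mathcal{B}$ as a diagonal-similar copy of $\A$ scaled by $e^{-\i\theta}$, so that $\S(\mathcal{B})=e^{-\i\theta}\S(\A)$, compatible with $\rho(\mathcal{B})=\rho(\A)$.

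I expect the main obstacle to be the equality step in part (2): converting the single scalar identity $\rho(\mathcal{B})=\rho(\A)$ into simultaneous tightness of both inequalities for every coordinate, and then into strict positivity of $|y|$. In the matrix case one would dot against a positive left eigenvector, but no such linear pairing is available for $x\mapsto\A x^{m-1}$, so this rests entirely on the nonlinear Perron--Frobenius machinery --- precisely the Collatz--Wielandt characterization and the fact that weak irreducibility forces a nonnegative Perron eigenvector to be strictly positive. Everything after that (reading off $|\mathcal{B}|=\A$ and the phase relation that assembles into the diagonal similarity) is elementary bookkeeping with the equality conditions of the triangle inequality.
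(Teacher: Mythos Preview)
The paper does not give its own proof of this statement: Theorem~\ref{PF2} is quoted from \cite{YY3} in the remarks section and is used as a black box, so there is nothing in the paper to compare your argument against.

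That said, your outline is essentially the standard proof one finds in the source \cite{YY3}. The structure---pass to moduli to get $\A|y|^{m-1}\ge\rho(\mathcal{B})|y|^{[m-1]}$, invoke Collatz--Wielandt for (1), then use weak irreducibility to force $|y|>0$ and upgrade the super-eigenvector inequality to equality for (2), and finally read off $|\mathcal{B}|=\A$ together with the phase identity from the equality cases---is exactly Yang and Yang's approach. One small point of presentation: the displayed identity $b_{ii_2\ldots i_m}=a_{ii_2\ldots i_m}e^{\i\theta}d_i^{m-1}\overline{d_{i_2}}\cdots\overline{d_{i_m}}$ is the \emph{conjugate} of the entry of $e^{-\i\theta}D^{-(m-1)}\A D$; the passage to the stated form uses that $\mathcal{B}$ (hence each $b_{ii_2\ldots i_m}$) is real, so you are implicitly conjugating both sides. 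You note this, but it would be clearer to say so explicitly rather than to assert that ``the right-hand side equals'' the desired expression.
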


\begin{theorem}[\cite{YY3}] \label{PF3}
Let $\A$ be an $m$-th order $n$-dimensional weakly irreducible nonnegative tensor.
Suppose $\A$ has $k$ distinct eigenvalues with modulus $\rho(\A)$ in total.
Then these eigenvalues are $\rho(\A)e^{\i \frac{2 \pi j}{k}}$, $j=0,1,\ldots,k-1$.
Furthermore, \begin{equation}\A =e^{-\i \frac{2\pi}{k}}D^{-(m-1)}\A D,\end{equation}
and the spectrum of $\A$ remains invariant under a rotation of angle $\frac{2\pi}{k}$ (but not a smaller positive angle) of the complex plane.
\end{theorem}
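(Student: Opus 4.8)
The plan is to obtain Theorem~\ref{PF3} from the Perron--Frobenius tools already recorded, principally Theorem~\ref{PF2}(2), together with two bookkeeping facts quoted above: diagonal similar tensors have the same spectrum, and multiplying a tensor by a scalar $c$ scales its spectrum by $c$ (clear from the eigenvalue equation, since $\A\x^{m-1}=\la\x^{[m-1]}$ gives $(c\A)\x^{m-1}=(c\la)\x^{[m-1]}$). The mechanism I want to exploit is that each eigenvalue of $\A$ of maximal modulus forces a diagonal similarity of $\A$ carrying a scalar rotation factor, which in turn imposes a rotational symmetry on the whole spectrum.

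First I would set $\rho:=\rho(\A)$; by Theorem~\ref{PF}(1), $\rho$ is an $H^+$-eigenvalue of $\A$, so the set of eigenvalues of modulus $\rho$ is nonempty. For an arbitrary such eigenvalue $\la=\rho e^{\i\theta}$ with eigenvector $y$, I would apply Theorem~\ref{PF2}(2) with $\mathcal B=\A$ (here $|\mathcal B|\le\A$ trivially, $\rho(\mathcal B)=\rho(\A)$, and $\A$ is weakly irreducible by hypothesis): $y$ has no zero entry and
\[
\A=e^{-\i\theta}D^{-(m-1)}\A D,\qquad D=\mathrm{diag}\!\left(\tfrac{y_1}{|y_1|},\dots,\tfrac{y_n}{|y_n|}\right).
\]
Since $D^{-(m-1)}\A D$ is diagonal similar to $\A$ it has spectrum $\S(\A)$, and scaling it by $e^{-\i\theta}$ scales the spectrum, so $\S(\A)=e^{-\i\theta}\,\S(\A)$; equivalently $\S(\A)$ is invariant under rotation by the angle $-\theta$, hence also by $\theta$.

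Next I would package the phases into the set $H:=\{e^{\i\theta}:\rho e^{\i\theta}\in\S(\A)\}\subseteq S^1$, which is finite (finitely many eigenvalues) and, by the distinctness hypothesis, satisfies $|H|=k$ via the bijection $\la\mapsto\la/\rho$ from the modulus-$\rho$ eigenvalues onto $H$. I claim $H$ is a subgroup: it contains $1$ (as $\rho\in\S(\A)$), and if $e^{\i\theta_1},e^{\i\theta_2}\in H$ then the previous step gives $\S(\A)=e^{\i\theta_1}\S(\A)$, so from $\rho e^{\i\theta_2}\in\S(\A)$ we get $\rho e^{\i(\theta_2-\theta_1)}\in\S(\A)$, i.e. $e^{-\i\theta_1}e^{\i\theta_2}\in H$; closure under $a,b\mapsto a^{-1}b$ together with nonemptiness makes $H$ a subgroup. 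A finite subgroup of the circle group having $k$ elements is exactly $\{e^{2\pi\i j/k}:j=0,1,\dots,k-1\}$, which proves the first two assertions. For the displayed identity, $e^{2\pi\i/k}\in H$ means $\rho e^{2\pi\i/k}$ is an eigenvalue, and feeding it into the first step (with $\theta=2\pi/k$) yields $\A=e^{-\i 2\pi/k}D^{-(m-1)}\A D$ for the corresponding $D$. For the last assertion, invariance of $\S(\A)$ under rotation by $2\pi/k$ is again the first step, and if $\S(\A)=e^{\i\alpha}\S(\A)$ held for some $0<\alpha<2\pi/k$, then $\rho\in\S(\A)$ would force $\rho e^{-\i\alpha}\in\S(\A)$, i.e. $e^{-\i\alpha}\in H=\{e^{2\pi\i j/k}\}$, impossible for such $\alpha$.

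The only step that is more than bookkeeping is the middle one: upgrading the family of individual symmetries $\S(\A)=e^{-\i\theta}\S(\A)$, one for each peripheral eigenvalue, to the statement that the peripheral phases form the \emph{full} group of $k$-th roots of unity. That rests on chaining two applications of Theorem~\ref{PF2}(2) correctly; once it is in place, the diagonal-similarity identity and the minimality of the rotation angle $2\pi/k$ come out immediately.
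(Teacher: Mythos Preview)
The paper does not supply its own proof of Theorem~\ref{PF3}; the result is quoted verbatim from \cite{YY3} and used as a black box in the remark section, so there is no in-paper argument to compare against.

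That said, your proof is correct and is essentially the standard argument (and, in outline, the one in \cite{YY3}). The key move---applying Theorem~\ref{PF2}(2) with $\mathcal B=\A$ to turn each peripheral eigenvalue $\rho e^{\i\theta}$ into a diagonal similarity $\A=e^{-\i\theta}D^{-(m-1)}\A D$, and then reading off the rotational invariance $\S(\A)=e^{-\i\theta}\S(\A)$ via the facts that diagonal similarity preserves the spectrum and that scalar multiplication scales it---is exactly the right mechanism. Your group-theoretic packaging of the peripheral phases as a finite subgroup $H\le S^1$ is clean and makes the identification $H=\{e^{2\pi\i j/k}:0\le j\le k-1\}$ immediate, after which the displayed identity and the minimality claim drop out. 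One small remark: to conclude $\S(\A)=e^{-\i\theta}\S(\A)$ as a \emph{multiset} (the paper's definition of spectrum) you implicitly need the homogeneity of the resultant under scalar multiplication, not just the eigenvector equation; this is standard and harmless, but worth a word if you want the invariance to carry multiplicities.
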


Suppose $\A$ be as in Theorem \ref{PF3}.
If $\S(\A)$ is invariant under a rotation of angle $\theta$ of the complex plane, i.e. $\S(\A)=e^{\i \theta}\S(\A)$,
then $\rho(\A)e^{\i \theta}$ is an eigenvalue of $\A$ by Theorem \ref{PF}.
By Theorem \ref{PF3}, $\theta=\frac{2 \pi j}{k}$ for some $j \in [k]$, and hence by Theorem \ref{PF2} (and taking $\mathcal{B}=\A$),
 $\S(\A)=e^{\i \frac{2 \pi j}{k}}\S(\A)$.
So, for some positive integer $\ell$, $\ell | k$,
  \begin{equation} \S(\A)=e^{\i \frac{2\pi}{\ell}}\S(\A).\end{equation}
The number $k$ in Theorem \ref{PF3} is exactly the cyclic index of $\A$.
In addition, if $\A$ is spectral $\ell$-symmetric,
Then $\ell \mid  c(\A)$ by Theorem \ref{PF3}.

Now return to a connected $t$-uniform hypergraph $G$ and its power $G^{m,s}$, where $m=st$.
By Lemma \ref{sym-G-power}, $G^{m,s}$ is spectral $c(G)$-symmetric; and by Lemma \ref{sym-s}, $G^{m,s}$ is also spectral $s$-symmetric.
So $G^{m,s}$ has eigenvalues $$\rho(G^{m,s})e^{\i \frac{2 \pi i}{c(G)}}e^{\i \frac{2 \pi j}{s}}, \; i \in [c(G)],j \in [s].$$ 
In particular,  $\rho(G^{m,s})e^{\i \frac{2 \pi }{d}}$ is an eigenvalue of $G^{m,s}$, where $d=\frac{s \cdot c(G)}{(s, c(G))}$.
So by Theorem \ref{PF2}, $G^{m,s}$ is spectral $d$-symmetric, which is consistent with Corollary \ref{part-conj}.

\end{document}